\documentclass{amsart}
\usepackage{verbatim, graphics, color}
\usepackage{tikz,pgf}
\usepackage{mathrsfs}
\usepackage{amsmath,amsthm,amsfonts}

\newtheorem{theorem}{Theorem}
\newtheorem{proposition}[theorem]{Proposition}

\newtheorem{definition}{Definition}


\author{Jacob A. White}
\address{Mathematical Sciences Research Institute, 17 Gauss Way, Berkeley, CA}
\email{jawhite@msri.org}
\thanks{
The author was partially supported by an NSF grant DMS-0932078,
administered by the Mathematical Sciences Research Institute while the
author was in residence at MSRI during the Complementary Program, Fall 2010.
This work began during the
visit of the author to MSRI and we thank the institute for its
hospitality.
}
\subjclass[2000]{Primary 05C31 Secondary 05C15 Secondary 05C65}

\date{}

\title[On Multivariate Chromatic Polynomials of Hypergraphs]{On Multivariate Chromatic Polynomials of Hypergraphs and Hyperedge Elimination}
\keywords{Hypergraphs, Colorings, Tutte Polynomials}

\begin{document}

\maketitle

\begin{abstract}

In this paper, we consider multivariate hyperedge elimination polynomials and multivariate chromatic polynomials for hypergraphs. The first set of polynomials is defined in 
terms of a deletion-contraction-extraction recurrence, previously investigated for graphs by Averbouch, Godlin, and Makowsky. The multivariate chromatic polynomial is an equivalent 
polynomial defined in terms of colorings, and generalizes the coboundary polynomial of Crapo, and the bivariate chromatic polynomial of Dohmen, P\"onitz and Tittman. We show that specializations of 
these new polynomials recover
 polynomials which enumerate hyperedge coverings, matchings, transversals, and section hypergraphs. 
We also prove that the polynomials can be defined in terms of M\"obius inversion on the bond lattice of a hypergraph, as well as compute these polynomials for various classes of hypergraphs.
\end{abstract}

\section{Introduction}

The chromatic polynomial of a graph enumerates the number of proper $k$-colorings of a graph. It was originally introduced by Birkhoff, who hoped that understanding this polynomial could lead to 
a proof of the four-color theorem. The chromatic polynomial was generalized by Tutte to give a two-variable polynomial, now called 
the Tutte polynomial (\cite{tutte-graph-theory}, \cite{tutte-ring}). The Tutte polynomial is actually defined for matroids in general, and have many wonderful enumerative properties. A good survey 
of these polynomials appears in \emph{Matroids and their Applications} \cite{brylawski-oxley}. 

There has been a recent resurgence in the study of graph polynomials. Prominent examples include the interlace polynomials, matching polynomials, independent set polynomials, and the edge elimination polynomial. 
This last polynomial is the main focus of this paper.
It was introduced by Averbouch, Godlin, and Makowsky \cite{averbouch-godlin-makowsky}. It is defined recursively in terms of three graph-theoretic operations - 
deletion, contraction, and extraction. Extraction does not appear to be a 
matroidal operation. In fact, not all trees on $n$ vertices have the same ege elimination polynomial, despite having isomorphic cycle matroids.

There are two major purposes of this present paper. The first is to introduce a new polynomial, the multivariate chromatic polynomial. This polynomial is equivalent to the edge elimination polynomial, 
as both can be obtained from each other by substitution. Moreover, we prove several results for the multivariate chromatic polynomial, which generalize known results about the coboundary polynomial, and the 
bivariate chromatic polynomial of Dohmen, P\"onitz, and Tittman \cite{dohmen-ponitz-tittman}. The second purpose of the paper is to present results for hypergraphs. 
Since it makes sense to define deletion, contraction, and extraction for hypergraphs, it also makes sense to prove results in this level of generality.

We show that certain evaluations of the hyperedge elimination polynomial or multivariate chromatic polynomial give polynomials that enumerate matchings, stable sets, hyperedge coverings, 
and section hypergraphs. The first two results are extensions of known results, but the latter two are new. Also, the Tutte polynomial is a specialization of these polynomials. We also give a 
subset expansion formula for the hyperedge elimination polynomial, generalizing the work of Averbouch et al, as well as a M\"obius inversion formula, generalizing known results for the coboundary 
polynomial and the bivariate chromatic polynomial. Furthermore, we are able to give formulas for the multivariate chromatic polynomial for particular classes of hypergraphs.

\section{Review of Hypergraph Terminology}

A hypergraph is a pair $(V, E)$, where $V$ is a finite set of vertices, and $E = \{e_i: i \in I, e_i \subset V \}$ is a collection of hyperedges (the hyperedges are
 indexed by some set $I$). Note that this means that we can have $i \neq j \in I$ with $e_i = e_j$. That is, we are allowing multiple edges in our hypergraphs. A $k$-edge is an edge with $k$ 
vertices. We are allowing $1$-edges, which in the context of coloring is equivalent to the notion of loops in a graph. We are also allowing empty edges, although we will only concern ourselves with 
empty edges when studying duality in hypergraphs. Finally, a pair of edges $e_i, e_j$ with $i \neq j$ are parallel if
 $e_i = e_j$. 

Given a hypergraph $H$, there are two notions of induced subgraph. Given a subset $A$ of vertices, a \emph{subhypergraph} is the hypergraph 
$H_A = (A, \{e_i \cap A: e_i \cap A \neq \emptyset \})$. Note that of course that the new index set for edges is $\{i \in I: e_i \cap A \neq \emptyset \}$. A \emph{vertex section hypergraph} 
is the hypergraph $H \times A = (A, \{e_i: e_i \subset A)$. 

Given a subset $J \subset I$, let $E_J = \{e_j: j \in J \}$. The \emph{partial hypergraph} $H_J = (V, E_J)$, and the \emph{edge section} hypergraph $H \times J$ has edge set $E_J$ 
and vertex set $\cup_{i \in J} e_i$. Note that in context one should be able to see the difference between partial hypergraph and subhypergraph. For disjoint sets $J, K \subset I$, 
we refer to $(J, K)$ as a vertex disjoint pair if $e \cap f = \emptyset$ for all $e \in E_J, f \in E_K$.

Let $H$ be a hypergraph, and let $e_i$ be a hyperedge. The \emph{deletion} is the hypergraph $H - e_i = (V, \{e_j: j \neq i \})$. 
The \emph{extraction} $H \dagger e_i$ is the hypergraph $(V \setminus e_i, \{e_j: e_j \cap e_i = \emptyset \})$.
The \emph{contraction} $H/e$ is obtained from $H \dagger e_i$ by adding one new vertex $v_i$, and edges $\{e'_j: i \neq j, e_j \cap e \neq \emptyset, e'_j = (e_j \setminus e_i) \cup \{v_i \})$. 
We shall refer to edges $e'_j$ as the partially contracted edges. We are interested in studying hypergraph polynomials that can be defined recursively in terms of these three operations. 
The most general polynomial satisfying such a recurrence will be called the \emph{hyperedge elimination polynomial}. 
An example of deletion, contraction, and extraction is given in Figure \ref{elimination}

\begin{figure}[htbp]
 \begin{center}
  \includegraphics[height=6cm]{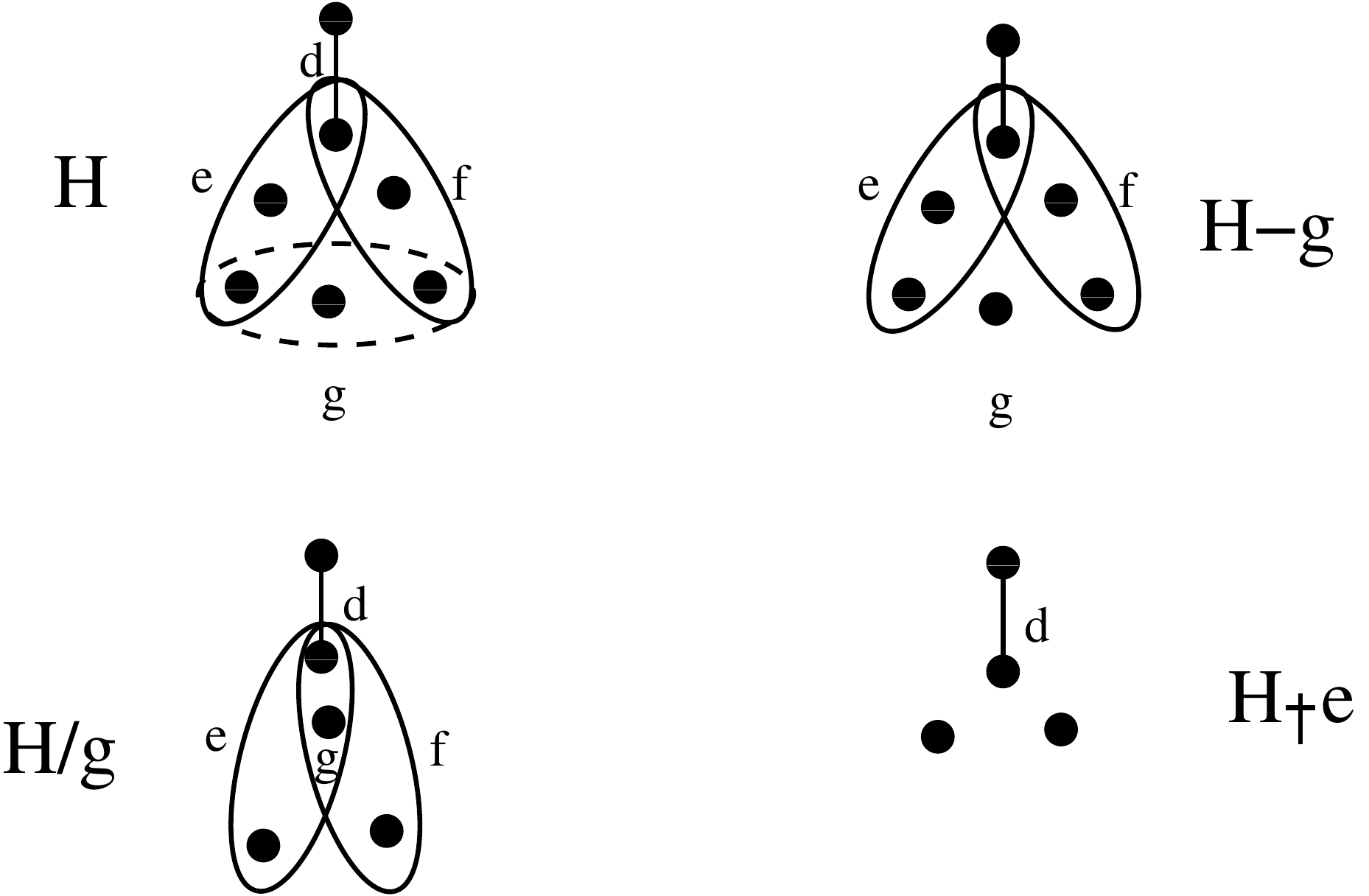}
 \end{center}
\caption{An example of deletion, contraction, and extraction}
\label{elimination}
\end{figure}

A \emph{chain} is a sequence $v_0, e_1, v_1, \ldots, e_k, v_k$, where $v_i \in e_i$ for $1 \leq i \leq k$, $v_i \in e_{i+1}$ for $0 \leq i \leq k-1$, and $e_1, \ldots, e_k$ are edges. 
If the edges are all distinct, we obtain a \emph{path}. If $k > 2$ and $v_0 = v_k$, we call the path a \emph{cycl}e. We say that a hypergraph is connected if for every two vertices $u$ 
and $v$ there exists a path with $v_0 = u$, $v_k = v$. As with graphs, a hypergraph decomposes into connected components. Let $k(H)$ denote the number of connected components of a hypergraph.

The decision to study multivariate polynomials is motivated by the work of Sokal \cite{sokal}. Relationships between graph polynomial sometimes have a multivariate analogue that is easier to prove.
For hypergraphs, there are at least two more reasons to focus on studying multivariate polynomials, with indeterminates for each hyperedge. First, given a set of indeterminates 
$w_0, w_1, \ldots$, we can make the substitution $t_e = t_{|e|}$, and obtain new polynomials. Secondly, we can do a further substitution, replacing $t_i$ with $t^i$. These resulting 
polynomials cannot be obtained from the edge elimination polynomials for general hypergraphs, yet they contain some very refined data regarding the structure of a graph.

Let $E_0$ denote the hypergraph with no vertices or edges. Let $E_1$ denote the hypergraph with only one vertex, and no edges.

\section{A List of Interesting Polynomials}
In this section, we define several polynomials involving combinatorial aspects of hypergraphs that are often studied. Most of these polynomials generalize to well-known graph polynomials, 
but a few of these polynomials are actually new. Throughout, fix a hypergraph $H$ with vertex set $V$ and edge set $E$. Let $n$ be the number of vertices, $m$ be the number of edges. 
Given any set $F \subseteq E$, let $m_*(F) = \sum_{e_i \in F} |e_i|$ (note that certain authors use $m_*(F)$ to denote this summation $ \sum_{e_i \in F} (|e_i| - 1)$).

A \emph{matching} in $H$ is a set $F \subseteq E$ of edges such that $e_i \cap e_j = \emptyset$ for all $e_i \neq e_j \in F$, $i \neq j$. The \emph{bivariate matching polynomial} is 
defined by $\mu(H; x, y) = \sum_{M} x^{n - m_*(M)} y^{|M|}$, where the summation is over all matchings of $H$. This generalizes the bivariate matching polynomial studied by Iverbouch et al. 
The mulivariate matching polynomial is given by $\mu(H; x, \textbf{y}) = \sum_M x^{n - m_*(M)} \prod_{e_i \in M} y_{e_i}$. Note that this generalizes the multivariate matching polynomial 
studied by Averbouch and Makowsky. For graphs, the substitutions $x = 1$ and $y_{uv} = y_{uv} x_u x_v$ results in the multivariate polynomial originally introduced by Heilman and Lieb \cite{heilman-lieb}. So this is one 
case where we see that it is natural to consider the multivariate version of a polynomial.

A \emph{hyperedge covering} is a collection of edges $F \subset E$ such that $\cup_{e \in F} e = V$. That is, every vertex of $H$ lies on some hyperedge in the covering. 
A vertex is \emph{exposed} if it is contained in no hyperedges. The hyperedge covering polynomial is defined by $\kappa(H;x, y) = \sum_{C} x^{|C|} y^{k(H|_C)}$ where the sum is over all
hyperedge coverings $C$, and the polynomial is $0$ if $H$ has an exposed vertex. The multivariate hyperedge covering polynomial is defined by 
$\kappa(H; x, y, \textbf{t}) = \sum_C x^{|C|} y^{k(H|_C)} \prod_{e \in C} t_e$. This polynomial does not appear in the literature.

A transversal is a set $S \subseteq V$ such that $S \cap e \neq \emptyset$ for all $e \in E$. The \emph{transversal polynomial} is defined by $\tau(H; x) = \sum_{S} x^{|S|}$ 
where the sum is over all transversals $S$. When $H$ is a graph, this polynomial is known as the vertex-cover polynomial. Recall that the complement of a transversal is an independent set, 
so the vertex-cover polynomial is actually the same polynomial as the independent set polynomial also appearing in the literature.

The section polynomial of a hypergraph is defined by $S(H; x, y) = \sum_{S \subseteq V} x^{n - |S|} y^{m(H \times S)}$.
The multivariate section polynomial is defined by $S(H; x, \textbf{y}) = \sum_{S \subseteq V} x^{n - |S|} \prod_{e \in E(H \times S)} y_e$. For graphs, these polynomials enumerated induced 
subgraphs, and do not appear to have been studied. Note that setting $y = 0$ recovers the transversal polynomial. Also consider the substitution $y_e = y_{|e|}$. In the resulting polynomial, the coefficient 
of $x^i y_1^{i_1} \cdots y_m^{i_m}$ is the number of section hypergraphs with exactly $n-i$ vertices and $i_j$ edges of size $j$ for all $j$. Also note that the resulting polynomial is a hypergraph 
invariant.

The Tutte polynomial and multivariate Tutte polynomial of a hypergraph have previously been defined. Using the Potts model form of the definition, we define the multivariate Tutte 
polynomial by $Z(H; x, \textbf{t}) = \sum_{J \subseteq I} x^{k(H|_I)} \prod_{j \in J} t_{e_j}$.

There are two more polynomials we will define in this paper: the hyperedge elimination polynomial, and the multivariate chromatic polynomial. These two polynomials differ only by
 substitutions, and hence are equivalent polynomials. Moreover, all the polynomials of this section are all substitutions, up to prefactors, of the hyperedge elimination polynomial 
(and the multivariate chromatic polynomial). The table shows these polynomials, and the corresponding substitutions involved to obtain them.
\begin{table}[htbp]
\begin{center}
\begin{tabular}{|c|c||c|c|}
\hline
Polynomial & Substitution & Multivariate & Substitution \\
\hline
$\kappa(H; x,y)$ & $\xi(H; 0, x, xy)$ & $\kappa(H; x, y, \textbf{t})$ & $\xi(H; 0, x, xy, \textbf{t})$ \\
\hline
$\mu(H; x, y)$ & $\xi(H; x, 0, y)$ & $\mu(H; x,y, \textbf{t})$ & $\xi(H; x,0,z,\textbf{t})$ \\
\hline
$Z(H; x,y)$ & $\xi(H; x,y,0)$ & $Z(H; x, \textbf{t})$ & $\xi(H;x,1,0,\textbf{t})$ \\
 \hline
$P(H; p,q,t)$ & $P(H; q, t-1, (q-p)(t-1))$ & $P(H; p, q, \textbf{t})$ & $\xi(H; q, 1, p-q, \textbf{t}-1)$ \\
\hline
$S(H; x,y)$ & $P(H; 1, x+1, y)$ & $S(H;x,\textbf{y})$ & $P(H; 1, x+1, \textbf{y})$ \\
\hline
$P_c(H; p, t)$ & $P(H; p, p, t)$ & $P_c(H; p, \textbf{t})$ & $P(H; p, p, \textbf{t})$ \\
\hline
$\xi(H;x,y,z)$ & $P(H; x+\frac{z}{y}, x, y+1)$ & $\xi(H;x,y,z,\textbf{t})$ & $P(H; x+\frac{z}{y}, x, y\textbf{t}+1)$ \\
\hline
 
\end{tabular}
\end{center}
\caption{Some polynomials, and the corresponding evaluations of $\xi$ and $P$}
\end{table}

We have chosen to let $P_c$ represent the coboundary polynomial of a hypergraph (to avoid confusion with the chromatic polynomial).

\section{The Hyperedge Elimination Polynomial}

In this section, we define the hyperedge elimination polynomial. 
\begin{definition}
 Let $\xi(H; x,y,z)$ be defined by 
$$\xi(H; x,y,z) = \sum_{(I, J)} x^{k(H_{I \sqcup J}) - k(H \times J)} y^{|I|+|J| - k(H \times J)} z^{k(H \times J)}$$
where the sum is over vertex disjoint pairs $(I, J)$. 
\end{definition}

\begin{theorem}
 $\xi(H; x, y, z)$ satisfies the following:
\begin{enumerate}
 \item $\xi(E_0; x,y,z) = 1$
\item $\xi(E_1; x,y,z) = x$
\item $\xi(H_1 \sqcup H_2; x,y,z) = \xi(H_1; x,y,z) \cdot \xi(H_2; x,y,z)$
\item for any $e \in E(H)$, we have $\\ \xi(H;x,y,z) = \xi(H-e;x,y,z) + y\xi(H/e; x,y,z) + z\xi(H\dagger e; x,y,z)$
\end{enumerate}

\end{theorem}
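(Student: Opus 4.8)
The plan is to verify the four properties directly from the definition $\xi(H; x,y,z) = \sum_{(I,J)} x^{k(H_{I \sqcup J}) - k(H \times J)} y^{|I|+|J| - k(H \times J)} z^{k(H \times J)}$, where the sum ranges over vertex-disjoint pairs $(I,J)$ of disjoint subsets of the edge-index set. Properties (1) and (2) are immediate: for $E_0$ the only pair is $(\emptyset,\emptyset)$, and all three exponents vanish (with the convention that the empty hypergraph has $k = 0$), giving $\xi(E_0) = 1$; for $E_1$ again only $(\emptyset,\emptyset)$ contributes, $k(E_1) = 1$, $k((E_1)\times\emptyset) = 0$, so the monomial is $x^1 y^0 z^0 = x$. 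For the multiplicativity property (3), I would observe that a vertex-disjoint pair in $H_1 \sqcup H_2$ decomposes uniquely as a pair $(I_1,J_1)$ in $H_1$ together with a pair $(I_2,J_2)$ in $H_2$, and that $k$, $|I|+|J|$, and $k(H\times J)$ are all additive across the disjoint union; multiplying the monomials then factors the sum as a product.

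The substance of the theorem is the deletion-contraction-extraction recurrence (4). Fix an edge $e = e_i$ and split the sum defining $\xi(H; x,y,z)$ according to whether $i \notin I \cup J$, $i \in I$, or $i \in J$. I claim these three pieces are exactly $\xi(H - e)$, $y\,\xi(H/e)$, and $z\,\xi(H \dagger e)$ respectively. The first piece, pairs with $i \notin I \cup J$, consists precisely of vertex-disjoint pairs of $H - e$ (the vertex-disjointness condition involves only the chosen edges, not $e$), and the relevant quantities $k(H_{I\sqcup J})$, $k(H\times J)$, $|I|+|J|$ are computed inside $H - e$ anyway — so this piece equals $\xi(H - e; x,y,z)$.

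For the second piece, pairs with $i \in I$: such a pair corresponds, by deleting $i$ from $I$, to a vertex-disjoint pair $(I', J)$ of $H/e$ — but one must check that the vertex-disjointness constraint between $I'$ and $J$ in $H/e$ (where edges are the partially-contracted $e'_j$, and $J$-edges are required disjoint from $e$) matches the constraint in $H$, and that $k(H_{I\sqcup J}) = k((H/e)_{I' \sqcup J})$ because contracting $e$ into a single vertex $v_i$ does not change connectivity among the remaining chosen edges, while $|I|+|J| = |I'|+|J|+1$, producing the extra factor of $y$; also $k(H\times J) = k((H/e)\times J)$ since $J$-edges avoid $e$. For the third piece, $i \in J$: removing $i$ from $J$ gives a vertex-disjoint pair $(I, J')$ of $H\dagger e = (V\setminus e, \{e_j : e_j \cap e = \emptyset\})$, and here the key bookkeeping is that $H\times J = (H\dagger e)\times J' \sqcup (\text{the one-component hypergraph on }e)$ essentially, so $k(H\times J) = k((H\dagger e)\times J') + 1$, which accounts for the factor $z$ (one extra $z$) and also cancels correctly against the $x$-exponent $k(H_{I\sqcup J}) - k(H\times J)$ and the $y$-exponent $|I|+|J| - k(H\times J)$; the vertices of $e$ contribute one new component to $H\times J$ that is subtracted off in both the $x$- and $y$-exponents, leaving the exponents for $(I,J')$ in $H\dagger e$ intact apart from the single extra $z$.

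The main obstacle is the careful accounting in the $i \in J$ case, and to a lesser extent the $i \in I$ case: one must be precise about how $k(H\times J)$, $k(H_{I\sqcup J})$, and $|I|+|J|$ transform, using the vertex-disjointness hypothesis to guarantee that the edges of $e$ form their own isolated block in $H\times J$ and hence contribute exactly $+1$ to $k(H\times J)$ and that this single unit is subtracted uniformly from both the $x$-exponent and the $y$-exponent so that, after factoring out $z^1$, the remaining monomial is literally the $\xi$-monomial of the corresponding pair in $H\dagger e$. I would handle this by writing $k(H\times J) = k((H\dagger e)\times J') + 1$ and $k(H_{I\sqcup J}) = k((H\dagger e)_{I\sqcup J'}) + 1$ (the component containing $e$ is separate from the components of $I\sqcup J'$ precisely by vertex-disjointness of the pair), then substituting and checking the exponents of $x$, $y$, $z$ each drop by the intended amount. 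Summing the three pieces then yields property (4).
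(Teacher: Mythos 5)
Your verification of (1)--(3) is fine, and splitting the subset expansion according to the position of the index $i$ of $e$ is the right starting point, but the three-way split you propose does not match the three terms of the recurrence, and the argument breaks exactly where you invoke vertex-disjointness in the $i \in J$ case. Vertex-disjointness of $(I,J)$ only constrains $E_I$ against $E_J$; it says nothing about the $J$-edges among themselves, so when $i \in J$ the edge $e$ need not be its own component of $H \times J$. Worse, if some $e_j$ with $j \in J$ meets $e$, then $(I, J\setminus\{i\})$ is not even a pair in $H \dagger e$, since $e_j$ is not an edge of $H\dagger e$, and your identities $k(H\times J)=k((H\dagger e)\times J')+1$ and $k(H_{I\sqcup J})=k((H\dagger e)_{I\sqcup J'})+1$ fail. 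Concretely, for the path with edges $e_1=\{a,b\}$, $e_2=\{b,c\}$ and $e=e_1$: the pair $(\emptyset,\{1,2\})$ has $i\in J$ and contributes $yz$, yet $H\dagger e_1$ has no edges; the $i\in J$ piece is $xz+yz$ while $z\,\xi(H\dagger e_1)=xz$. Dually, your $i\in I$ map $(I,J)\mapsto(I\setminus\{i\},J)$ is injective but not surjective onto the vertex-disjoint pairs of $H/e$: any pair of $H/e$ in which a $J$-edge contains $v_e$ (i.e.\ the original $e_j$ met $e$) is never hit, because $(I\cup\{i\},J)$ would violate vertex-disjointness in $H$; in the same example the $i\in I$ piece is $x^2y+xy^2$, not $y\,\xi(H/e_1)=x^2y+xy^2+yz$. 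The two discrepancies cancel in the total, but the piecewise identities you assert are false, so the proof as written does not go through.

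The repair, which is essentially what the paper does (it proves the multivariate recurrence and specializes), is a finer case analysis: pairs with $i\notin I\cup J$ give $\xi(H-e)$; pairs with $i\in J$ such that $e$ forms its own component of $H\times J$ give $z\,\xi(H\dagger e)$ (your bookkeeping is valid precisely in this subcase); and the term $y\,\xi(H/e)$ is assembled from two sources: the pairs with $i\in I$, which account for those pairs $(I',J')$ of $H/e$ for which $(I'\cup\{i\},J')$ is still vertex-disjoint in $H$, together with the pairs with $i\in J$ in which $e$ meets some other $J$-edge, which account (via $(I,J)\mapsto(I,J\setminus\{i\})$, with a factor $y$ since $k((H/e)\times(J\setminus\{i\}))=k(H\times J)$ there) for the remaining pairs of $H/e$. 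Showing that these two sources are disjoint and jointly exhaust the vertex-disjoint pairs of $H/e$ is the combinatorial step missing from your argument.
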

We actually prove a similar recurrence of the multivariate hyperedge elimination polynomial in a later section. The above theorem follows through specialization.

\begin{proposition}
 Let $f$ be a function from hypergraphs to some integral domain $R$, such that $f$ is invariant under graph isomorphism, and $f$ satisfies a recurrence with parameters $\alpha, \beta, \gamma, \delta \in R$ subject to:
\begin{enumerate}
 \item $f(\emptyset) = 1$
\item $f((\{v\}, \emptyset)) = \alpha$
\item $f(H_1 \sqcup H_2) = f(H_1) \cdot f(H_2)$
\item for any $e \in E(H)$, we have $\\ f(H) = \beta f(H-e) + \gamma f(H/e) + \delta f(H\dagger e)$
\end{enumerate}
Then either:
\begin{enumerate}
 \item $\delta = 0$ and $f(H) = \beta^{m(H)} \xi(H; \alpha, \frac{\gamma}{\beta}, 0)$
\item $\beta = 1$ and $f(H) = \xi(H; \alpha, \gamma, \delta)$ 
\item $f(H) = \alpha^{n(H)} = \xi(H; 1, \alpha, 0)$
\end{enumerate}

\end{proposition}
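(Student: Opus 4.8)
The plan is to first extract algebraic relations among $\alpha,\beta,\gamma,\delta$ from the hypotheses, and then, case by case, identify $f$ with the corresponding specialization of $\xi$ by induction on the number of edges. By hypothesis (3) together with $f(E_1)=\alpha$, the value of $f$ on an edgeless hypergraph with $n$ vertices is $\alpha^n$; this is the base case of every induction below. The relations come from the fact that (4) holds \emph{for every} edge $e$, so that eliminating the edges of a fixed small hypergraph in two different orders must give the same element of $R$. Take $H$ to be the hypergraph on $\{u,v\}$ with a single $2$-edge $e_1=\{u,v\}$ and a single loop $e_2=\{u\}$. Reducing each hypergraph that appears to a copy of $E_0$, $E_1$, a single loop, or a single $2$-edge (via isomorphism-invariance, (3) and (4); so that $f$ of a single loop is $\beta\alpha+\gamma\alpha+\delta$ and $f$ of a single $2$-edge is $\beta\alpha^2+\gamma\alpha+\delta$), elimination of $e_1$ first and of $e_2$ first give, respectively, $f(H)=(\beta\alpha+\gamma)(\beta\alpha+\gamma\alpha+\delta)+\delta$ and $f(H)=(\beta+\gamma)(\beta\alpha^2+\gamma\alpha+\delta)+\delta\alpha$; these agree if and only if $\delta(\alpha-1)(\beta-1)=0$. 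Since $R$ is an integral domain, one of $\delta=0$, $\beta=1$, $\alpha=1$ holds.

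If $\beta=1$: by Theorem 1, the function $g$ given by $g(H)=\xi(H;\alpha,\gamma,\delta)$ satisfies $g(E_0)=1$, $g(E_1)=\alpha$, is multiplicative, and obeys $g(H)=g(H-e)+\gamma g(H/e)+\delta g(H\dagger e)$ for every edge $e$, which is precisely hypothesis (4) with $\beta=1$. Since $H-e$, $H/e$, $H\dagger e$ each have fewer edges than $H$, induction on the number of edges — base case the edgeless hypergraphs, where $f=g=\alpha^n$ — yields $f(H)=\xi(H;\alpha,\gamma,\delta)$ for all $H$.

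If $\delta=0$: I would instead verify directly that $f$ equals $h(H):=\sum_{F\subseteq E(H)}\alpha^{k(H_F)}\gamma^{|F|}\beta^{m(H)-|F|}$, where $H_F$ denotes the partial hypergraph on the edge set $F$. By the $z=0$ case of the definition of $\xi$ this equals $\beta^{m(H)}\xi(H;\alpha,\gamma/\beta,0)$, read as a polynomial in $\alpha,\beta,\gamma$ so as to remain meaningful when $\beta$ is not invertible in $R$. One checks that $h$ is multiplicative with $h(E_0)=1$ and $h(E_1)=\alpha$, and that $h(H)=\beta h(H-e)+\gamma h(H/e)$: split the defining sum according to whether $e\in F$, using that $H_F=(H-e)_F$ when $e\notin F$, that $(H/e)_{F\setminus\{e\}}$ is obtained from $H_F$ by contracting $e$ (so it has the same number of components) when $e\in F$, and that $m(H-e)=m(H/e)=m(H)-1$. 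Since $f$ satisfies the same recurrence and base values, the same induction gives $f(H)=h(H)=\beta^{m(H)}\xi(H;\alpha,\gamma/\beta,0)$.

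If $\delta\neq 0$ and $\beta\neq 1$, the relation of the first paragraph forces $\alpha=1$. One more two-order computation — on the three-edge hypergraph on $\{u,v\}$ with a loop at $u$, a loop at $v$, and the $2$-edge $\{u,v\}$, eliminating the $2$-edge first versus the loop at $u$ first — yields $\delta(\beta+\gamma+\delta-1)(\beta-1)=0$, hence $\beta+\gamma+\delta=1$. Then the value of $f$ at the one-vertex hypergraph with a single loop is $\beta\alpha+\gamma\alpha+\delta=\beta+\gamma+\delta=1$, and induction on the number of edges — base case the edgeless hypergraphs, where $f=\alpha^n=1$ — gives $f(H)=\beta f(H-e)+\gamma f(H/e)+\delta f(H\dagger e)=\beta+\gamma+\delta=1=\alpha^{n(H)}$, the evaluation in the statement. (The three cases overlap, for instance when $\delta=0$ and $\beta=1$, where the two formulas agree; the argument just tests $\delta=0$, then $\beta=1$, then the remaining case, in that priority.) The main obstacle is the bookkeeping in the two- and three-order computations: one must track carefully how loops and the partially-contracted edges behave under contraction and extraction, reduce each intermediate hypergraph to $E_0$, $E_1$ and disjoint unions, and choose test hypergraphs small enough to evaluate yet rich enough to make the relations factor as above. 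Given the relations, the three inductions are routine applications of Theorem 1.
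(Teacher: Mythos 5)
Your proof is correct, but in the decisive case it takes a genuinely different route from the paper's. For $\delta=0$ and $\beta=1$ the paper argues exactly as you do, only more tersely: it just observes that the corresponding specialization of $\xi$ satisfies the same recurrence and initial conditions; your explicit inductions, and your rewriting of $\beta^{m(H)}\xi(H;\alpha,\gamma/\beta,0)$ as $\sum_{F\subseteq E}\alpha^{k(H_F)}\gamma^{|F|}\beta^{m(H)-|F|}$ so as to avoid dividing by $\beta$, are refinements of the same idea. In the remaining case $\delta\neq0$, $\beta\neq1$, however, the paper never derives constraints on the parameters: it attaches to an \emph{arbitrary} hypergraph $H$ and vertex $v$ two new vertices and the $2$-edges $vy$, $yz$, eliminates these two edges in both orders, and extracts the functional equation $\alpha f(H-v)=f(H)$, from which $f(H)=\alpha^{n(H)}$ follows by induction on the number of vertices. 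You instead run the two-order comparison on fixed small test hypergraphs (using $1$-edges, which the paper's setting does permit) to pin down the parameters themselves: $\delta(\alpha-1)(\beta-1)=0$ forces $\alpha=1$, a second comparison forces $\beta+\gamma+\delta=1$, and then $f\equiv1=\alpha^{n(H)}$ by induction on the number of edges; I checked both of your confluence computations and they are right. Your route yields slightly more (the parameters are shown to satisfy $\alpha=1$ and $\beta+\gamma+\delta=1$), while the paper's gadget argument works uniformly in $H$ and uses only $2$-edges; both ultimately rest on comparing two elimination orders. Neither proof addresses the identity $\alpha^{n(H)}=\xi(H;1,\alpha,0)$ asserted in case (3), which appears to be a misprint in the statement (that evaluation equals $(1+\alpha)^{m(H)}$ for hypergraphs without empty edges); like the paper's own proof, yours establishes the substantive claim $f(H)=\alpha^{n(H)}$, so this is not a gap on your side.
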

\begin{proof}
 If $\delta = 0 $ or $\beta = 1$ we see that the corresponding evaluation of $\xi$ yields the same recursion as $f$, and hence the equality holds.

So assume $\delta \neq 0$ and $\beta \neq 1$. Let $H$ be a hypergraph, let $v$ be a vertex of $H$. Consider two new vertices $y, z$ that are not vertices of $H$, 
and construct a new hypergraph $G$, by adding vertices $y, z$ to $H$, and hyperedges $e$ with vertex set $vy$ and $f$ with vertex set $yz$. 

First, eliminate edge $e$, then eliminate edge $f$:
\begin{displaymath}
\begin{array}{ccl}
 
f(G) &  = & \beta f(G-e) + \gamma f(G/e) + \delta f(G\dagger e) \\ & = &  \beta (\beta f(G-e-f) + \gamma f(G-e/f) + \delta f(G-e \dagger f)) \\ & + &  
\gamma (\beta f(G/e-f) + \gamma f(G/e/f) + \delta f(G/e \dagger f)) + f(G \dagger e) \\ & = & (\beta^2 \alpha^2 + 2\alpha \beta \gamma + \beta \delta)f(H) + (\alpha \delta + \beta \delta) f(H-v)
\end{array}
\end{displaymath}

Instead, first eliminate edge $e$, then edge $f$ to obtain:
$$f(G) = (\beta^2 \alpha^2 + 2\alpha \beta \gamma + \delta)f(H) + (\alpha \beta \delta + \beta \delta) f(H-v)$$

Thus we obtain:
$$\alpha \delta f(H-v) + \beta \delta f(H) = \delta f(H) \alpha \beta \delta f(H-v)$$
or equivalently:
$$(1-\beta)\delta \alpha f(H-v) = (1 - \beta) \delta \alpha f(H)$$
Since $\delta \neq 0$, $\beta \neq 1$, and $R$ is an integral domain, we have $\alpha f(H - v) = f(H)$ for all hypergraphs $H$ and vertices $v$. Hence an inductive argument on $n$ yields that $f(H) = \alpha^{n(H)}$.
\end{proof}
Thus, any sort of function that obeys the hyperedge elimination recursion and is a graph invariant must be, up to prefactor, an evaluation of the hyperedge elimination polynomial. 
Note that this argument and result for graphs was previously given by Averbouch et al \cite{averbouch-godlin-makowsky}. They also choose to show that a polynomial defined in terms of 
this recurrence gives a graph invariant in the case $w = 1$. However, the fact that $\xi$, defined as a subset expansion, satisfies this recurrence already proves this fact, so we will 
not give the alternate, algebraic proof.

\section{The Multivariate Hyperedge Elimination Polynomial}
Now we define the multivariate hyperedge elimination polynomial. This is a hypergraph extension of the labeled edge elimination polynomial, and we denote it $\xi(G; x,y,z,\textbf{t})$. 

\begin{definition}
 Let $\xi(H; x,y,z, \textbf{t})$ be defined by 
$$\xi(H; x,y,z, \textbf{t}) = \sum_{(I, J)} x^{k(H_{I \sqcup J}) - k(H \times J)} y^{|I|+|J| - k(H \times J)} z^{k(H \times J)} \prod_{i \in I \sqcup J} t_{e_i}$$
where the sum is over vertex disjoint pairs $(I, J)$. 
\end{definition}

\begin{theorem}
 $\xi(H; x, y, z)$ satisfies the following:
\begin{enumerate}
 \item $\xi(E_0; x,y,z,\textbf{t}) = 1$
\item $\xi(E_1; x,y,z, \textbf{t}) = x$
\item $\xi(H_1 \sqcup H_2; x,y,z,\textbf{t}) = \xi(H_1; x,y,z,\textbf{t}) \cdot \xi(H_2; x,y,z, \textbf{t})$
\item for any $e \in E(H)$, we have $\\ \begin{array}{ccl} \\ \xi(H; x,y,z,\textbf{t}) & = & \xi(H-e; x,y,z,\textbf{t}_{\neq e}) \\ 
& + & yt_{e}\xi(H/e;x,y,z, \textbf{t}_{\neq e}) \\ & + & z t_{e}\xi(H\dagger e; x,y,z,\textbf{t}_{\bot e})\end{array} \\$ where $\textbf{t}_{\neq e} = \{t_f: f \in E(H - e) \}$ and $\textbf{t}_{\bot e} = \{t_f: f \in E(H \dagger e) \}$. 
\end{enumerate}

\end{theorem}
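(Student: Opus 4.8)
The plan is to check the four conditions directly against the subset-expansion definition. Conditions (1) and (2) are immediate: for both $E_0$ and $E_1$ the only vertex disjoint pair is $(\emptyset,\emptyset)$, and since $k(E_0)=0$, $k(E_1)=1$, and $H\times\emptyset=E_0$ in each case, the lone surviving monomial is $1$, respectively $x$. For condition (3), write $H=H_1\sqcup H_2$; since no vertex is shared, an index of $H_1$ is never vertex-disjoint-constrained against an index of $H_2$, so a vertex disjoint pair of $H$ is exactly a vertex disjoint pair of $H_1$ together with one of $H_2$, and $k(H_{I\sqcup J})$, the quantity $|I|+|J|-k(H\times J)$, the quantity $k(H\times J)$, and the monomial $\prod_{i\in I\sqcup J}t_{e_i}$ are all additive (resp.\ multiplicative) over this splitting, so the sum factors. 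I expect no difficulty with (1)--(3).

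Condition (4) is the real content. Fix $e=e_k$ and identify the edge sets of $H-e$ and of $H/e$ with the common index set $\{j:j\neq k\}$, with the convention that a partially contracted edge $e'_j=(e_j\setminus e_k)\cup\{v_*\}$ of $H/e$ (where $v_*$ denotes the new vertex) carries the indeterminate $t_{e_j}$. Partition the vertex disjoint pairs $(I,J)$ of $H$ by the role of $k$: (A) $k\notin I\cup J$; (B) $k\in J$ and $e_j\cap e_k=\emptyset$ for every $j\in J\setminus\{k\}$; (C1) $k\in I$; (C2) $k\in J$ with $e_j\cap e_k\neq\emptyset$ for some $j\in J\setminus\{k\}$. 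A class (A) pair is literally a vertex disjoint pair of $H-e$, and all three exponents and the monomial depend only on the chosen edges and the unchanged vertex set, so class (A) sums to $\xi(H-e;x,y,z,\textbf{t}_{\neq e})$. For a class (B) pair, vertex-disjointness forces $e_k$ to meet no other chosen edge, so $e_k$ is its own connected component both in $H_{I\sqcup J}$ and in $H\times J$; then $(I,J)\mapsto(I,J\setminus\{k\})$ is a bijection onto the vertex disjoint pairs of $H\dagger e$ under which the $x$- and $y$-exponents are preserved, the $z$-exponent drops by one, and the monomial loses exactly $t_{e_k}$, so class (B) sums to $z\,t_e\,\xi(H\dagger e;x,y,z,\textbf{t}_{\bot e})$.

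Classes (C1) and (C2) must together produce $y\,t_e\,\xi(H/e;x,y,z,\textbf{t}_{\neq e})$, and this is the step I expect to be the main obstacle, because the natural ``delete $k$'' maps now interact with the partially contracted edges. The key structural fact is that every partially contracted edge contains $v_*$, so a vertex disjoint pair of $H/e$ can never carry a partially contracted edge on both sides; hence the vertex disjoint pairs of $H/e$ split into those whose $J$-part contains no partially contracted edge --- the images of class (C1) under $(I,J)\mapsto(I\setminus\{k\},J)$ --- and those whose $J$-part contains at least one --- the images of class (C2) under $(I,J)\mapsto(I,J\setminus\{k\})$ --- and I would verify that each assignment is a well-defined bijection onto the stated family. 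The exponent comparison then reduces to the elementary fact that contracting a nonempty hyperedge $e_k$ changes neither the number of connected components of $H_{I\sqcup J}$ nor that of the relevant edge section hypergraph, the vertices of $e_k$ forming a single component that merely collapses to $v_*$; granting this, the $x$- and $z$-exponents are unchanged while $|I|+|J|$ and the monomial each shed precisely the index $k$, giving the prefactor $y\,t_e$. Summing the four class-contributions yields the recurrence. I would isolate ``contracting a hyperedge preserves the number of components'' as a short lemma, since it is the one genuinely combinatorial point and is invoked repeatedly.
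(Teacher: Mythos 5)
Your proposal is correct and takes essentially the same route as the paper's proof: a direct comparison of subset expansions, partitioning the vertex disjoint pairs $(I,J)$ of $H$ according to whether the index of $e$ is absent, lies in $J$ with $e$ meeting no other chosen edge, lies in $J$ otherwise, or lies in $I$, and matching these classes bijectively with the vertex disjoint pairs of $H-e$, $H\dagger e$, and $H/e$ with factors $1$, $zt_e$, and $yt_e$ respectively (your characterization of the two $H/e$ classes via partially contracted edges in the $J$-part is equivalent to the paper's characterization via whether $(I+k,J)$ or only $(I,J+k)$ is vertex disjoint in $H$). The paper likewise treats items (1)--(3) as immediate and, as you propose, the exponent bookkeeping rests on the observation that contracting a chosen hyperedge leaves the relevant component counts unchanged.
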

\begin{proof}
 Let $(I, J)$ be a vertex disjoint pair. Let $p(H; I, J) = \\ x^{k(H_{I \sqcup J}) - k(H \times J)} y^{|I|+|J| - k(H \times J)} z^{k(H \times J)} \prod_{i \in I \sqcup J}$. Observe that $e \not\in E_{I \sqcup J}$ if and only if $(I,J)$ is a vertex disjoint pair for $H - e$. In such a case we see that $p(H -e; I, J) = p(H; I, J)$.
Now suppose $e \in E_J$, say $e_j$, and that it is in its own component. Then $(I, J - j)$ is a vertex disjoint pair for $H \dagger e$, and moreover $p(H; I, J) = zt_{e} p(H \dagger e; I, J-j)$. 
Suppose $e_j$ is not in an isolated component. Then $(I, J-j)$ is a vertex disjoint pairt for $H/e$ and $p(H; I, J) = yt_{e} p(H/e; I, J-j)$. Moreover, this covers all vertex disjoint pairs $(I, J)$ of $H/e$ for which $(I, J+j)$ is vertex disjoint for $H$ but $(I+j, J)$ is not.
Now suppose $e = e_i, i \in I$. Then $(I-i, J)$ is vertex disjoint for $H/e$, and $p(H; I, J) = p(H/e; I-i, J)$. This covers all vertex disjoint pairs $(I, J)$ of $H/e$ for which $(I+i, J)$ is a vertex disjoint pair for $H$. 
Thus, summing over all $(I, J)$, we obtain the result.
\end{proof}

Similar to \cite{averbouch-godlin-makowsky}, we could study the most general hyperedge elimination recurrence. 
We would discover some necessary conditions for a polynomial satisfying such a recurrence to be independent of the order of edges chosen.
Likewise, we would find sufficient conditions, and every polynomial meeting these sufficient conditions is, up to prefactor, and evaluation of the 
multivariate hyperedge elimination polynomial. However, it appears that not all polynomials which satisfy the hyperedge elimination recurrence are evaluations of $\xi(H;x,y,z, \textbf{t})$.

\subsection{Substitutions of the Multivariate Hyperedge Elimination Polynomial}

In this section, we consider what happens when we evaluate some of the variables in $\xi(H; x,y,z, \textbf{t})$ at $0$. 
Most of these evaluations extend known results for graphs. Note that $\xi(H; x,y,z, \textbf{t}) = x^n$ when $t_e = 0$ for all edges $e$.

First, if we evaluate at $z = 0$, we see that our hyperede elimination recurrence only involves deletion and contraction, and thus we end up with the 
multivariate Tutte polynomial (defined in terms of the Potts model). Note that for graphs, this notion of multivariate Tutte polynomial is equivalent to the usual definition of 
the Tutte polynomial, up to some substitution of variables and prefactor. That is, $\xi(H; x,y,0, \textbf{t}) = Z(H; x, y\textbf{t})$, and 
it is known \cite{sokal} $T(H; x,y) = (x-1)^{-c(H)} (y-1)^{-|V|} Z(H; (x-1)(y-1), y-1)$, where $T(H; x,y)$ is the usual Tutte polynomial.

Now we consider setting $y = 0$. In this case, our recurrence only involves deletion and extraction. One can check that the resulting recurrence is satisfied by the multivariate matching 
polynomial $\mu(H; x,y, \textbf{t})$. Consider an edge $e$. Any matching not involving $e$ is enumerated in $H-e$. Any matching involving $e$ is enumerated by $y_e \mu(H \dagger e; x, \textbf{y}_{\neq e})$.
So $\mu(H; x,\textbf{y}) = \mu(H; x, \textbf{y}) + y_e \mu(H; x, \textbf{y}_{\neq e})$. Hence $\xi(H; x, 0, z, \textbf{t}) = \mu(H; x, z\textbf{t})$.

Finally, consider the case $x = 0$. This case was not considered by Averbouch et al \cite{averbouch-godlin-makowsky}, 
and actually yields an interesting polynomial for hypergraphs. If $H$ has an isolated vertex, the result is $0$. So suppose $H$ has no isolated vertices. 
In terms of subset expansions, the only terms that do not vanish correspond to disjoint vertex pairs $(I, J)$ for which $k(H_{I \sqcup J}) = k(H \times J)$. 
In such a situation $I = \emptyset$, and $H$ must not have any isolated vertices. Then we see that $k(H_J) = k(H \times J)$, which is true if and only if $E_J$ is a hyperedge cover of $H$. 
Therefore the subset expansion reduces to a summation over hyperedge coverings, and we see that we obtain the hyperedge cover polynomial.
Thus $\kappa(H; x, y, \bar{t}) = \xi(H; 0, x, xy, \bar{t})$. One could also verify the hyperedge elimination recurrence, and the initial condition $\kappa(E_1, x,y, \bar{t}) = 0$.

\subsection{The Multivariate Hyperedge Elimination Polynomial for certain hypergraphs}

Let $P_{m, r}$ be the $r$-uniform elementary path hypergraph with $m$ edges, where $r \geq 2$, $m \geq 1$. That is, $P_{m,r}$ has edges $e_1, \ldots, e_m$, where each edge has exactly $r$ vertices. Moreover, $|e_{i-1} \cap e_i| = 1$ for $2 \leq i \leq m$, and $|e_{i} \cap e_{i+1}| = 1$ for $1 \leq i \leq m -1$, and these edges sets are otherwise disjoint.

Let $P_{m,r}(x,y,z) = \xi(P_{m,r}; x,y,z)$. For fixed $r$, we have thus defined a sequence of polynomials.

Consider applying hyperedge elimination to the edge $e_1$. We see that $P_{m,r} - e_1$ has $r-1$ isolated vertices, and then $P_{m-1,r}$ as the remaining component. Thus $\xi(P_{m,r} - e) = x^{r-1} \xi(P_{m-1}, r)$. Similarly, $\xi(P_{m,r}/e) = \xi(P_{m-1, r})$, and $\xi(P_{m,r} \dagger e) = x^{r-2} \xi(P_{m-2, r})$. 

Thus $P_{m,r}(x,y,z) = (x^{r-1} + y) P_{m-1, r}(x,y,z) + zx^{r-2} P_{m-2, r}(x,y,z)$, for $m > 2$, with initial conditions $P_{0,r}(x,y,z) = 1$ and $P_{1,r}(x,y,z) = x^r + yxt_e + zt_e$. Setting $x= z = 1$ and $y = x-1$, we specialize to the Fibonacci polynomials. That is $P_{m,r}(1,x-1, 1) = F_{m+2}(x)$. From this recurrence, one can show that the generating function $P_r(x,y,z,q) = \sum_{m \geq 0} P_{m,r}(x,y,z)q^m$ is given by $\frac{1+x^r+yx+z+q(x^{r-1}+y)}{1-q(x^{r-1}+y)-q^2zx^{r-2}}$.

Let $C_{m,r}$ be the $r$-uniform elementary hypercycle with $m \geq 3$ edges. That is, $C_{m,r}$ is obtained from $P_{m,r}$ by identifying some vertex in $e_1 \setminus e_2$ and $e_m \setminus e_{m-1}$.
Let $C_{m,r}(x,y,z) = \xi(C_{m,r}; x,y,z)$. Then $C_{m,r}(x,y,z) = x^{r-2}P_{m-1,r}(x,y,z) + yC_{m-1,r}(x,y,z) + zx^{2r-4}P_{m-3, r}(x,y,z)$. This is shown by applying hyperedge elimination to any 
edge of $C_{m,r}$. One can use this to give a recurrence for $C_{m,r}$ of order 3. However, we do not take this approach, as it is tedious.

\section{The Multivariate Chromatic Polynomial}

Now we define the multivariate chromatic polynomial, which generalizes the coboundary polynomial and bivariate chromatic polynomial of  graph. Let $q$ be a positive integer. 
Then a function $f: V(H) \to [q]$ is refered to as a $q$-coloring. A hyperedge $e_i$ is monochromatic if $f(u) = f(v)$ for all $u, v \in e$. The coloring $f$ is proper if it has no 
monochromatic hyperedge. Let $p \leq q$ be a positive integer. We now view $1, \ldots, p$ as \emph{primary} colors. A \emph{primary} hyperedge edge $e$ satisfying $f(u) = f(v) \leq p$ for all 
$u,v \in e$. Given a coloring $f$, let $P(f)$ denote the set of primary edges. That is, it is an edge whose vertices are all colored with the same primary color. 

Let $$P(H; p, q, \textbf{t}) = \sum_{f: V \to [q]} \prod_{e \in P(f)} t_{e}$$
We see that for every pair of positive integers $p \leq q$, this defines a multivariate polynomial in the $t_{e}$s. However, we will show that $P(H;p, q, \textbf{t})$ actually is a 
multivariate polynomial in variables $t_{e}$, $p$ and $q$. We call this polynomial the multivariate chromatic polynomial. Also, we call $P(H; p, \textbf{t}) = P(H; p, p, \textbf{t})$ 
the multivariate coboundary polynomial (as it specializes to the coboundary polynomial upon setting all $t_{e} = t$). We see that $P(H; p, q) = P(H;p, q, 0)$ is the bivariate chromatic polynomial 
defined by Dohmen P\"onitz and Tittman \cite{dohmen-ponitz-tittman}. Finally, let $P(H; p,q,t)$, the \emph{trivariate chromatic polynomial} be obtained by setting all $t_e = t$.
Next we prove a theorem that expresses the multivariate chromatic polynomial of $H$ in terms of coboundary polynomials of section hypergraphs of $H$. This generalizes a 
result of Dohmen et al \cite{dohmen-ponitz-tittman}.
\begin{theorem}
 $P(H;p,q,\textbf{t}) = \sum_{S \subset V} P(H \times S; p, \textbf{t}) (q-p)^{|V|-|S|}$
\end{theorem}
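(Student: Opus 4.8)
The plan is to establish the identity by partitioning the sum defining $P(H;p,q,\textbf{t})$ over all $q$-colorings of $H$ according to which vertices receive a color in the ``primary'' range $[p]=\{1,\dots,p\}$. For a coloring $f\colon V\to[q]$, set $S(f)=\{v\in V : f(v)\le p\}$. Then $f$ is recovered uniquely from the triple consisting of $S=S(f)$, the restriction $g=f|_S\colon S\to[p]$, and the restriction $h=f|_{V\setminus S}\colon V\setminus S\to\{p+1,\dots,q\}$; conversely, every triple $(S,g,h)$ with $S\subseteq V$ arises from a unique such $f$. For a fixed $S$ there are exactly $(q-p)^{|V|-|S|}$ choices of $h$.

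The key observation is that the weight $\prod_{e\in P(f)}t_e$ depends only on the pair $(S,g)$, and in fact equals the corresponding weight of $g$ computed inside the vertex section hypergraph $H\times S=(S,\{e_i:e_i\subseteq S\})$. Indeed, an edge $e_i$ is primary for $f$ precisely when all of its vertices share a common color that is $\le p$; this forces $e_i\subseteq S$, so $e_i$ is an edge of $H\times S$, and for such an edge ``$e_i$ primary for $f$'' is equivalent to ``$e_i$ monochromatic for $g$''. Since $H\times S$ is colored here with all $p$ of its colors, every monochromatic edge of $g$ is automatically primary, so the primary-edge set of $g$ in $H\times S$ is exactly $\{i : e_i\subseteq S,\ e_i\in P(f)\}=P(f)$, and the $t$-variables attached to these edges are indexed identically in $H$ and in $H\times S$.

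Combining these two points, I would rewrite
$$P(H;p,q,\textbf{t})=\sum_{f\colon V\to[q]}\ \prod_{e\in P(f)}t_e=\sum_{S\subseteq V}\ \sum_{\substack{g\colon S\to[p]\\ h\colon V\setminus S\to\{p+1,\dots,q\}}}\ \prod_{e\in P(g)}t_e,$$
then factor out the inner sum over $h$, which contributes the constant $(q-p)^{|V|-|S|}$, and recognize the remaining sum $\sum_{g\colon S\to[p]}\prod_{e\in P(g)}t_e$ as $P(H\times S;p,p,\textbf{t})=P(H\times S;p,\textbf{t})$. This yields $P(H;p,q,\textbf{t})=\sum_{S\subseteq V}P(H\times S;p,\textbf{t})\,(q-p)^{|V|-|S|}$, as claimed; the term $S=V$ contributes $P(H;p,\textbf{t})$ and the term $S=\emptyset$ contributes $(q-p)^{|V|}$, matching the expected boundary behavior.

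I expect no serious obstacle here: the argument is a bijective regrouping of colorings, and the only point that demands care is the bookkeeping in the second paragraph, namely checking that ``primary edge of $f$'' translates \emph{exactly} to ``monochromatic edge of $f|_S$ in $H\times S$'' with no edges lost or double-counted. This is also where one should phrase things in terms of the index set $I$ rather than underlying vertex subsets, since $H$ is permitted to have parallel edges.
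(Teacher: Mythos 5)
Your proposal is correct and follows essentially the same route as the paper's proof: both partition the $q$-colorings of $H$ according to the set $S$ of vertices receiving a primary color, identify the primary edges of $f$ with the monochromatic (hence primary) edges of $f|_S$ in $H\times S$, and factor out the $(q-p)^{|V|-|S|}$ choices of colors for $V\setminus S$. Your write-up merely makes the bijection $f\leftrightarrow(S,g,h)$ and the edge-indexing bookkeeping more explicit than the paper does.
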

\begin{proof}
Let $S$ be a set of vertices, $p \leq q$ be positive integers, and consider the set $P_S$ of all $q$-colorings of $H$ that only assign primary colors to vertices of $S$. Let $P(S) = \sum_{f \in P_S} \prod_{e \in P(f)} t_e$. Given such a coloring $f$, $f|_S$ is a $p$-coloring of $H \times S$, and the monochromatic edges of this coloring are the primary edges of $H$.
Moreover, there are $(q-p)^{|V|-|S|}$ colorings $f'$ that only assign primary colors to $S$, and satisfy $f'|_S = f|_S$. 
Thus we see that $P(S) = P(H \times S; p, \textbf{t}) (q-p)^{|V|-|S|}$. Clearly $P(H; p,q, \textbf{t}) = \sum_{S \subset V} P(S)$, and the result follows.
\end{proof}

\subsection{Multivariate Section Polynomial of a Hypergraph}

Recall that we defined the multivariate Section Polynomial by 
$S(q, \textbf{t}) = \sum_{S \subset V} q^{|V| - |S|} \prod_{e \in E(H \times S)} t_{e}$.
Then the next result may be viewed as a multivariate generalization of the result of Dohmen et al \cite{dohmen-ponitz-tittman}.
\begin{proposition}
 $S(H; q, \textbf{t}) = P(H; 1, q+1, \textbf{t})$.
\end{proposition}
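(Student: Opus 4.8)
The plan is to specialize the theorem relating $P(H;p,q,\textbf{t})$ to coboundary polynomials of section hypergraphs, taking $p=1$ and $q+1$ in place of $q$. First I would write down
\[
P(H;1,q+1,\textbf{t}) = \sum_{S\subseteq V} P(H\times S;1,\textbf{t})\,((q+1)-1)^{|V|-|S|} = \sum_{S\subseteq V} P(H\times S;1,\textbf{t})\,q^{|V|-|S|},
\]
so the whole proposition reduces to the identity $P(H\times S;1,\textbf{t}) = \prod_{e\in E(H\times S)} t_e$ for every vertex-section hypergraph $H\times S$, which in turn reduces to the cleaner claim: for any hypergraph $G$, $P(G;1,\textbf{t}) = P(G;1,1,\textbf{t}) = \prod_{e\in E(G)} t_e$.

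To prove that claim I would unwind the definition of $P(G;p,q,\textbf{t})$ with $p=q=1$. A $1$-coloring $f\colon V(G)\to[1]$ is the unique constant coloring, and with only one available color every vertex receives the single primary color $1$. Hence \emph{every} hyperedge of $G$ is primary: $P(f) = E(G)$. Since there is exactly one such coloring, the sum $\sum_{f\colon V\to[1]} \prod_{e\in P(f)} t_e$ collapses to the single term $\prod_{e\in E(G)} t_e$. Substituting back, $P(H;1,q+1,\textbf{t}) = \sum_{S\subseteq V} q^{|V|-|S|}\prod_{e\in E(H\times S)} t_e$, which is precisely the stated definition of $S(H;q,\textbf{t})$ (matching the displayed formula $S(q,\textbf{t}) = \sum_{S\subseteq V} q^{|V|-|S|}\prod_{e\in E(H\times S)} t_{e}$).

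There is essentially no hard step here; the only point requiring a moment's care is the edge-set bookkeeping in the section hypergraph. One must confirm that the edges of $H\times S$ over which the product in $P(H\times S;1,\textbf{t})$ ranges are exactly the edges $e_i\subseteq S$, i.e.\ $E(H\times S) = \{e_i : e_i\subseteq S\}$ per the definition of the vertex section hypergraph, so that the product $\prod_{e\in P(f)}t_e$ for the constant coloring of $H\times S$ genuinely equals $\prod_{e\in E(H\times S)}t_e$ with the same indexing of variables used in the definition of $S(H;q,\textbf{t})$. Once that is noted, the two sums are term-by-term equal and the proposition follows. I would also remark that this recovers the Dohmen--P\"onitz--Tittman identity $S(H;x,y) = P(H;1,x+1,y)$ upon the substitutions $q=x$ and $t_e=y$.
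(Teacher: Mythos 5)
Your argument is correct and is essentially the paper's own proof: specialize the section-hypergraph expansion theorem at $p=1$ with $q+1$ in place of $q$, then observe that $P(G;1,\textbf{t})=\prod_{e\in E(G)}t_e$ since the unique $1$-coloring makes every edge primary. The extra remark on the edge-set bookkeeping for $H\times S$ is a harmless elaboration of what the paper leaves implicit.
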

\begin{proof}
 We know that $P(H; 1, q+1, \textbf{t}) = \sum_{S \subset V} q^{|V|-|S|} P(H \times S; 1, \textbf{t})$, and clearly $P(H; 1, \textbf{t}) = \prod_{e \in E(H)} t_{e}$ for any hypergraph $H$.
\end{proof}
Note that we obtain the transversal polynomial upon setting $\textbf{t} = 0$.

\subsection{A M\"obius Function Interpretation and Connected Partitions}

Given a partition $\pi$ of $V$, we say that $\pi$ is a connected partition if $H \times S$ is a connected hypergraph for each block $S$ of $\pi$ that is not a singleton. 
Let $\Pi_H$ denote the collection of connected partitions of $H$, ordered by refinement. Observe that $\Pi_H$ is an example of a lattice. It has a unique minimum element, corresponding to 
the partition of $V$ into singletons. It also has a maximum element, corresponding to partitioning $V$ into the vertex sets of the components of $H$.

Given a connected partition $\pi$, and positive integers $p \leq q$, let $f(\pi)$ denote the number of colorings such that:
\begin{itemize}
 \item  $f(u) > p$ if and only if $u$ is a singleton in $\pi$
\item if $e$ is a primary edge, then $e \subset S$ for some block $S$ of $\pi$
\end{itemize}
Note that this definition is equivalent to the definition of $f(\pi)$ given in the paper of Dohmen et al \cite{dohmen-ponitz-tittman}. 
By abuse of notation, we write $e \subset \pi$ to mean the vertex set of $e$ is a subset of some block of $\pi$.
Then $P(H; p, q, \textbf{t}) = \sum_{\pi \in \Pi_H} f(\pi) \prod_{e: e \subset \pi} t_{e}$.
Also, let $k_1(\pi)$ denote the number of singletons of $\pi$. Then $$q^{k_1(\pi)} p^{|\pi| - k_1(\pi)} = \sum_{\sigma \geq \pi} f(\sigma)$$
Using M\"obius inversion and combining these two facts, we obtain:
\begin{theorem}
 $P(H; p, q, \textbf{t}) = \sum_{\pi \leq \sigma \in \Pi_H} q^{k_1(\sigma)} p^{|\sigma|-k_1(\sigma)} \prod_{e: e \subset \pi} t_{e}$.
\end{theorem}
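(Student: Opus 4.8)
The plan is to obtain the formula by M\"obius inversion in the lattice $\Pi_H$ of connected partitions, exactly as suggested by the two displayed identities preceding the statement. I would first record the two facts carefully. The first is that
$$P(H; p, q, \textbf{t}) = \sum_{\pi \in \Pi_H} f(\pi) \prod_{e: e \subset \pi} t_e,$$
which one proves by partitioning the set of $q$-colorings of $V$ according to the partition $\pi$ determined by the rule ``$u \sim v$ if $u$ and $v$ receive the same primary color and lie in a common block'' — more precisely, each coloring $f$ determines its set $P(f)$ of primary edges, and grouping colorings by the coarsest connected partition $\pi$ whose blocks contain all primary edges (with singletons exactly the non-primary vertices) shows that $\sum_{e \in P(f)} t_e$ contributes to the $\pi$-term; summing $1$ over such colorings gives $f(\pi)$, and $\prod_{e\in P(f)} t_e$ divides $\prod_{e: e\subset \pi} t_e$ — here one must be a little careful that the product in the final formula is over \emph{all} edges inside $\pi$, not just the primary ones, so the bookkeeping needs the observation that for a fixed $\pi$, as $f$ ranges over colorings realizing exactly $\pi$, the edges $e \subset \pi$ that are \emph{not} primary contribute no $t_e$ factor, which is precisely why $f(\pi)$ appears rather than a weighted count. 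The second fact is the ``zeta-transform'' identity $q^{k_1(\pi)} p^{|\pi| - k_1(\pi)} = \sum_{\sigma \geq \pi} f(\sigma)$, which holds because the left side counts colorings $f$ in which every primary edge lies inside a block of $\pi$ and singletons of $\pi$ may take any of the $q$ colors while non-singleton blocks must be monochromatic in a primary color (giving $p$ choices each) — and such colorings are classified by the finer data of which connected partition $\sigma \ge \pi$ they actually realize.

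Granting these two identities, I would finish as follows. By the second identity, $f = \mu * \zeta^{-1}$ of the function $g(\pi) := q^{k_1(\pi)} p^{|\pi| - k_1(\pi)}$; concretely, M\"obius inversion over $\Pi_H$ gives
$$f(\sigma) = \sum_{\tau \geq \sigma} \mu(\sigma, \tau)\, q^{k_1(\tau)} p^{|\tau| - k_1(\tau)},$$
but in fact the cleaner route avoids writing $\mu$ explicitly: substitute the \emph{first} identity's weight $\prod_{e: e \subset \pi} t_e$ and the second identity directly. Write
$$P(H;p,q,\textbf{t}) = \sum_{\pi \in \Pi_H} \Big(\prod_{e:e\subset\pi} t_e\Big) f(\pi) = \sum_{\pi \in \Pi_H}\Big(\prod_{e:e\subset\pi} t_e\Big)\sum_{\sigma\ge\pi}\mu(\pi,\sigma)\, q^{k_1(\sigma)}p^{|\sigma|-k_1(\sigma)},$$
using $f(\pi) = \sum_{\sigma \ge \pi}\mu(\pi,\sigma)g(\sigma)$. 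This already exhibits $P(H;p,q,\textbf{t})$ as a sum over pairs $\pi \le \sigma$, but with a $\mu(\pi,\sigma)$ factor, whereas the claimed formula has no $\mu$. So the genuine content of the theorem is that the $\mu$'s telescope away — equivalently, that $\sum_{\pi: \pi \le \sigma,\ \pi \text{ fixed product}}$ collapses.

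The resolution, and the main obstacle, is this: the weight $\prod_{e:e\subset\pi} t_e$ is \emph{not} constant as $\pi$ varies below $\sigma$, so a naive $\sum_\pi \mu(\pi,\sigma) = [\sigma = \hat 0]$ argument does not apply. Instead I would reverse the order of summation and group by $\pi$: the claimed right-hand side is $\sum_{\pi} \big(\prod_{e:e\subset\pi}t_e\big)\sum_{\sigma \ge \pi} q^{k_1(\sigma)}p^{|\sigma|-k_1(\sigma)}$, and by the second displayed identity the inner sum $\sum_{\sigma\ge\pi} q^{k_1(\sigma)}p^{|\sigma|-k_1(\sigma)}$ is \emph{not} what we want — rather, it is $\sum_{\sigma \ge \pi} g(\sigma)$, and the identity says $\sum_{\sigma \ge \pi} f(\sigma) = g(\pi)$, i.e. $f$ and $g$ are related by the zeta transform \emph{up the lattice}. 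Hence I should read the target formula correctly: it equals $\sum_{\pi\le\sigma} q^{k_1(\sigma)}p^{|\sigma|-k_1(\sigma)}\prod_{e:e\subset\pi}t_e = \sum_{\pi}\big(\prod_{e:e\subset\pi}t_e\big)g(\pi)$ by the second identity applied at $\pi$ — wait, that replaces $\sum_{\sigma\ge\pi}g(\sigma)$, not $\sum_{\sigma\ge\pi}f(\sigma)$. The correct manipulation is: start from $P = \sum_\pi f(\pi)\prod_{e:e\subset\pi}t_e$ and substitute $f(\pi)$ via M\"obius inversion of the \emph{first} identity is wrong too; the clean statement is that the \emph{weighted zeta transform} of $f$ over principal order filters, combined with $\sum_{\sigma\ge\pi}f(\sigma) = g(\pi)$, yields after interchange $\sum_{\sigma} g(\sigma)\,(\text{something})$. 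I expect the actual proof to run: $\sum_{\pi\le\sigma} g(\sigma)\prod_{e:e\subset\pi}t_e = \sum_\pi \prod_{e:e\subset\pi}t_e \sum_{\sigma\ge\pi} g(\sigma)$, and then one needs $\sum_{\sigma\ge\pi} g(\sigma) = $ something matching $f$; since the stated identity is $\sum_{\sigma\ge\pi}f(\sigma)=g(\pi)$, M\"obius inversion gives $f(\pi) = \sum_{\sigma\ge\pi}\mu(\pi,\sigma)g(\sigma)$, so in fact the theorem as literally stated must be interpreted with the understanding that $\sum_{\pi\le\sigma}$ means we first fix $\sigma$ and the $\pi$-sum is trivial (only $\pi=\sigma$?) — no. The honest plan, then, is: prove the first identity (coloring classification), prove the second identity (zeta transform, routine counting), and then observe that the claimed formula is \emph{exactly} $\sum_\pi \big(\prod_{e:e\subset\pi}t_e\big)\big(\sum_{\sigma\ge\pi}g(\sigma)\big)$ rewritten — and by M\"obius inversion of the second identity, $P = \sum_\pi f(\pi)\prod_{e:e\subset\pi}t_e = \sum_\pi\prod_{e:e\subset\pi}t_e\sum_{\sigma\ge\pi}\mu(\pi,\sigma)g(\sigma)$, and the claimed formula should be read with $\mu$ absorbed, meaning the paper's statement is shorthand whose verification is precisely this double-sum identity. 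The single real obstacle is managing the interaction between the non-constant edge-weight $\prod_{e:e\subset\pi}t_e$ and the M\"obius/zeta pair; everything else is the two routine counting lemmas already sketched in the excerpt.
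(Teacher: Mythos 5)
Your overall route is the same as the paper's: the proof there consists precisely of the two displayed identities, $P(H;p,q,\textbf{t})=\sum_{\pi\in\Pi_H}f(\pi)\prod_{e\subset\pi}t_e$ and $q^{k_1(\pi)}p^{|\pi|-k_1(\pi)}=\sum_{\sigma\ge\pi}f(\sigma)$, combined by M\"obius inversion on $\Pi_H$, which is exactly what you do. The discrepancy you circled at length is resolved in your favor: the theorem as printed is missing the factor $\mu(\pi,\sigma)$, and your inverted form $P(H;p,q,\textbf{t})=\sum_{\pi\le\sigma\in\Pi_H}\mu(\pi,\sigma)\,q^{k_1(\sigma)}p^{|\sigma|-k_1(\sigma)}\prod_{e\subset\pi}t_e$ is the intended statement. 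A one-edge check settles it: for $V=\{u,v\}$ and a single edge $e$, the polynomial is $P=q^2-p+p\,t_e$, the $\mu$-free double sum gives $q^2+p+p\,t_e$, and inserting the M\"obius value $-1$ of the interval from the all-singleton partition to the one-block partition restores the correct answer. So you were right that the $\mu$'s do not telescope and right to resist forcing them to; the clean writeup is two lines — invert the second identity to get $f(\pi)=\sum_{\sigma\ge\pi}\mu(\pi,\sigma)q^{k_1(\sigma)}p^{|\sigma|-k_1(\sigma)}$ and substitute into the first — and the lengthy back-and-forth in your final paragraph should simply be cut. One small repair to your sketch of the first identity: the partition attached to a coloring should be the one whose non-singleton blocks are the vertex sets of the connected components of the hypergraph of primary edges (the finest suitable connected partition, not the ``coarsest''); that choice is what guarantees that the edges contained in blocks are exactly the primary edges, so the weight $\prod_{e\subset\pi}t_e$ is accounted for correctly.
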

This gives a proof that $P(H; p, q, \textbf{t})$ is a polynomial. This theorem generalizes known results in the 
case $t_e = 0$ \cite{dohmen-ponitz-tittman}, and the case $p = q$ \cite{brylawski-oxley}.

\subsection{Hyperedge Elimination for the Multivariate Chromatic Polynomial}

\begin{theorem}
  $P(H; p, q, \textbf{t})$ satisfies the following:
\begin{enumerate}
 \item $P(E_0; p, q, \textbf{t}) = 1$
\item $P(E_1; p, q, \textbf{t}) = q$
\item $P(H_1 \sqcup H_2; p, q, \textbf{t}) = P(H_1; p,q, \textbf{t}) \cdot P(H_2; p,q,\textbf{t})$
\item for any $e \in E(H)$, we have 
\begin{displaymath}
 \begin{array}{ccl}
  P(H; p,q,\textbf{t}) & = & P(H-e; p,q,\textbf{t}_{\neq e}) \\ & + & (t_{e}-1)P(H/e; p, q, \textbf{t}_{\neq e}) \\ & + & (1-t_{e})(q-p)P(H\dagger e; p, q, \textbf{t}_{\bot e})
 \end{array}
\end{displaymath}
where $\textbf{t}_{\neq e} = \{t_f: f \in E - e \}$, $\textbf{t}_{\bot e} = \{t_f: f \in E \dagger e \}$.
\end{enumerate}
\end{theorem}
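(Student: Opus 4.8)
The plan is to work directly from the defining sum $P(H;p,q,\textbf{t}) = \sum_{f\colon V\to[q]}\prod_{e\in P(f)}t_e$ for a fixed pair of positive integers $p\le q$, verify all four identities at every such pair, and then upgrade them to identities of polynomials by invoking the polynomiality established in the M\"obius-inversion theorem above together with the observation that a polynomial in $p$ and $q$ vanishing on all pairs of positive integers $p\le q$ is identically zero. Items (1)--(3) are immediate: $E_0$ admits only the empty coloring, which has no primary edges, so its weight is the empty product $1$; $E_1$ admits $q$ colorings, none with a primary edge, so $P(E_1;p,q,\textbf{t})=q$; and a $q$-coloring of $H_1\sqcup H_2$ is a pair of $q$-colorings whose sets of primary edges form the disjoint union of the two, so the weight $\prod_{e\in P(f)}t_e$ factors and (3) follows.

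For the recurrence (4) I would introduce the weight $w_H(f)=\prod_{e\in P_H(f)}t_e$, so that $P(H;p,q,\textbf{t})=\sum_f w_H(f)$ with $f$ ranging over all $q$-colorings $f\colon V\to[q]$. Since $H$ and $H-e$ share the vertex set $V$, the same functions index both sums and $P_{H-e}(f)=P_H(f)\setminus\{e\}$; hence $w_H(f)=w_{H-e}(f)$ when $e$ is not primary under $f$, while $w_H(f)=t_e\,w_{H-e}(f)$ when $e$ is primary. Splitting the sum according to whether $e$ is primary gives
\[
P(H;p,q,\textbf{t}) = P(H-e;p,q,\textbf{t}_{\neq e}) + (t_e-1)\sum_{f\colon e\in P_H(f)} w_{H-e}(f),
\]
so the task reduces to showing that the last sum equals $P(H/e;p,q,\textbf{t}_{\neq e}) - (q-p)\,P(H\dagger e;p,q,\textbf{t}_{\bot e})$.

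To identify that sum I would parametrize the colorings $f$ with $e$ primary by the common primary color $c\in[p]$ of the vertices of $e$, and set up the bijection $f\leftrightarrow\bar f$ with colorings $\bar f$ of $H/e$ satisfying $\bar f(v_e)=c$, defined by $\bar f|_{V\setminus e}=f|_{V\setminus e}$ together with $\bar f(v_e)=c$ (with inverse: color all of $e$ with $\bar f(v_e)$). The crux is checking that this bijection preserves primary edges, hence weights: an edge $g\neq e$ with $g\cap e=\emptyset$ is an edge of both $H\dagger e$ and $H/e$ and is primary under $f$ iff primary under $\bar f$; an edge $g\neq e$ with $g\cap e\neq\emptyset$ corresponds to the partially contracted edge $g'=(g\setminus e)\cup\{v_e\}$, and since the vertices of $g\cap e$ are colored $c\in[p]$, the edge $g$ is primary under $f$ iff every vertex of $g\setminus e$ is colored $c$, iff $g'$ is primary under $\bar f$. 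Using the convention that $g'$ carries the indeterminate $t_g$ (so that $\textbf{t}_{\neq e}$ is meaningful for $H/e$), the weights agree, and summing over $c\in[p]$ gives $\sum_{f\colon e\in P_H(f)}w_{H-e}(f)=\sum_{\bar f\colon \bar f(v_e)\in[p]}w_{H/e}(\bar f)$.

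Finally I would account for the colorings $\bar f$ of $H/e$ with $\bar f(v_e)\notin[p]$: there are $q-p$ choices of such a color, and for any such $\bar f$ no edge through $v_e$ can be primary, so $P_{H/e}(\bar f)=P_{H\dagger e}(\bar f|_{V\setminus e})$ and these colorings contribute exactly $(q-p)\,P(H\dagger e;p,q,\textbf{t}_{\bot e})$; subtracting from $P(H/e;p,q,\textbf{t}_{\neq e})$ and substituting back yields the stated recurrence once the sign on the $(q-p)$ term is collected. The main obstacle is the bookkeeping in this primary-edge correspondence rather than any conceptual subtlety: one must handle parallel edges equal to $e$ (which contract to a loop at $v_e$ that is primary precisely when $\bar f(v_e)$ is primary, consistent with $e$ being primary), $1$-edges, and the assignment of indeterminates to partially contracted edges, all carefully.
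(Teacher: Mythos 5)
Your proof is correct and takes essentially the same approach as the paper: both argue directly from the coloring expansion, matching colorings in which $e$ is primary with colorings of $H/e$ whose value at $v_e$ is primary, and accounting for the $q-p$ non-primary colors of $v_e$ via colorings of $H\dagger e$. The paper merely organizes the same bookkeeping by splitting colorings of $H$ into three types (non-monochromatic, monochromatic non-primary, primary) instead of isolating the ``$e$ primary'' sum and then decomposing $P(H/e;p,q,\textbf{t}_{\neq e})$ by the color of $v_e$, so the two arguments are the same in substance.
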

\begin{proof}
 The only difficult statement is the hyperedge elimination recurrence. Given a coloring $f$, let $p(H,f) = \prod_{e \in P(f)} t_{e}$. We consider three different types of colorings: if the edge $e$ is primary under the coloring $f$, then 
we say $f$ is of type 3. If $e$ is monochromatic but not primary, we say it is of type $2$. Otherwise, we say $f$ is of type $1$. Let $P_i(H; x,y,z, \textbf{t})$ be the summation previously defined for $P(H; p,q,\textbf{t})$, 
but restricted only to colorings of type $i$. Then $P(H; p,q, \textbf{t}) = P_1(H; p,q, \textbf{t}) + P_2(H; p,q, \textbf{t}) + P_3(H; p,q, \textbf{t})$.
We also divide colorings of $H/e$ into two types. Such a coloring is of type $a$ if $f(v_e) \leq p$, and is of type $b$ otherwise.

Suppose $f$ is of type $f$ is of type $1$. Then $f$ is a coloring of $H-e$ and $P(H-e, f) = p(H, f)$. If $f$ is of type $3$, then we see that $f$ is a coloring of $H - e$ and $f$ induces a coloring on 
$H/e$ by making $v_e$ the same color as the vertices of $e$. Then $p(H, f) = p(H-e, f) + (t_{e} - 1) p(H/e, f')$. Note that $f'$ is a coloring of type $a$.
Finally, suppose $f$ is of type $2$, and let $f'$ be the induced coloring on $H/e$, $f''$ be the coloring restricted to $V-e$. Then $p(H, f) = p(H-e, f) + (t_{e} - 1) p(H/e, f) + (1-t_e)P(H \dagger e, f'')$. 

We see that if we substitute these relations in the summation $P_1 + P_2 + P_3$, we have the term $P(H-e; p, q, \textbf{t})$. Also note that given a coloring $f''$ of $H \dagger e$, and an element $i > p$, we 
obtain a coloring $f$ of type $3$ from $f''$ by assigning all vertices of $e$ the color $i$. We see that we obtain all colorings of type $3$ this way, and so we have obtained $(1-t_e)(q-p)P(H \dagger e; p,q, \textbf{t})$.

The only remaining terms involve colorings of $H/e$, and we obtain $(t_e - 1)P(H/e; p, q, \textbf{t})$.
\end{proof}
Note that this allows us to conclude that $P(H; p,q, \bar{t})$ is a substitutions of $\xi(H; x,y,z, \bar{t})$. It is not hard to show that the converse is also true, so these polynomials are equivalent.

\subsection{Multivariate Chromatic Polynomial for Special Classes of Hypergraphs}

Here we study the multivariate chromatic polynomials of complete $r$-uniform hypergraphs, complete $r$-uniform hyperstars, and sunflower hypergraphs. In some sense, this demonstrates some of the beauty of studying multivariate chromatic polynomials: trying to obtain the equivalent expressions from the hyperedge elimination recurrence, or from subset expansion seem unlikely, but the coloring interpretation makes it much simpler.

We say that an $r$-uniform hypergraph is a hyperstar if $\cap_{e \in H} e \neq \emptyset$. Given $v \in V$, $H$ is a complete $r$-uniform hyperstar centered at $v$ if the edge set consists of all $r$-subsets of $V$ containing $v$ (with no parallel edges). 
Such hypergraphs are unique up to isomorphism, so we define the complete $r$-uniform hyperstar on $[n]$ to consist of all $r$-subsets of $[n]$ containing the vertex $n$. We denote this graph by $H_{n,r}$.

$$P(H_{n,r}; p, q, \textbf{t}) = q^n + \sum_{S: S \subset [n-1], |S| \geq r-1} p (t_{S + n} - 1) (q-1)^{n-|S|-1}$$
where $t_{S+n} = \prod_{T: n \in T \subset S + n, |T| = r} t_T$.
Fix a coloring $f$. Then there exists a set of vertices $S$ such that $f(v) = f(n)$ for all $v \in S$. 
If $|S| > r-1$, and $f(n) \in [p]$, then there are primary edges contained in $S$. Thus we get a contribution of $t_{S+n}$ from this coloring. There are $(x-1)^{n - |S|}$ ways we could have colored the remaining vertices. So there are $p(q-1)^{n-|S|-1}$ colorings $f$ for which $S$ is the set of vertices with the same color as $n$, and $n$ receives a primary color. For each of these, we have a contribution $t_{S+n}$. We also add a term $q^n$ to count all colorings, including those which have no primary edges. Finally, the $-1$ portion of the term $t_{S+n} - 1$ comes from the fact that $q^n$ counts colorings with primary edges, but with the contribution $1$ instead of $t_{S+n}$, so we cancel these colorings out with the minus one term.

Setting $t_e = t$ for all edges $e$, we obtain:
$$P(H_{n,r}; p, q, t) = q^n + \sum_{k=r-1}^{n-1} \binom{n-1}{k} p (t^{\binom{n-1}{k}} - 1) (q-1)^{n-k-1}$$

Another interesting class of hypergraphs is the class of sunflowers. A sunflower hypergraph $H$ is a hypergraph with a set $S \subseteq V$ such that $S \subseteq e$ for every hyperedge $e$, and $\{e \setminus S: e \in E(H) \}$ is a collection of pairwise disjoint sets. Note that we require $H$ to have no parallel hyperedges. We refer to the vertices of $S$ as \emph{seeds}. Let $H$ be a sunflower graph with edge set $\{e_1, \ldots, e_{\ell} \}$.

We have $$P(H; p, q, \textbf{t}) = q^n + \sum_{S \subset E(H)} p(t_S - 1) \prod_{e \not\in S} (q^{|e|-s}-1)$$
where $s$ is the number of seeds of $H$, and $t_S = \prod_{e \in S} t_e$. Given a coloring $f$, if $H$ has any primary edges, then they are all of the same primary color (since every edge contains all 
seeds). Also, since edges are disjoint apart from the set of seeds, we can consider the edges separately. That is, we can choose to consider functions based on the set of edges they leave monochromatic. 
Let $S$ be a set of edges. Then there are $p$ ways to choose a color to make all the edges of $S$ primary, and for every edge not in $S$, there are $q^{|e|-s}$ ways to color the vertices of those edges that are 
not seeds. Thus we obtain the summation. The term $t_S - 1$ comes from the fact that we are enumerating colorings that leave $S$ primary twice: once corresponding to the term $q^n$, which has the wrong weight. 
Thus $t_S - 1$ corrects this weight.

Suppose $S_{r,\ell, s}$ is the $r$-uniform sunflower hypergraph with $\ell$ edges and $s$ seeds. Then we have
$$P(S_{r, \ell, s}; p, q, t) = q^n + \sum_{k=1}^{\ell} p\binom{\ell}{k}(t^k-1)(q^{r-s}-1)^{\ell - k}.$$

\section{Future Work}

We have chosen not to investigate computation complexity questions in this paper. One question is to write an algorithm to determine the coefficients of $\xi(H; x,y,z, \textbf{t})$ or 
$P(H; p, q, \textbf{t})$. For graphs, polynomial time algorithms exist, provided we assume the graphs have bounded tree-width. It seems techniques in this area should work for hypergraphs. 
We ask the following question: given integers $m$, $p$ and $k$, is there a polynomial time algorithm for computing the multivariate chromatic polynomial of a hypergraph $H$, provided the maximum 
size of an edge of $H$ is at most $m$, there are at most $p$ pairwise parallel edges, and $H$ has hypertree-width at most $k$? Is the runtime of such an algorithm a polynomial in $n$, $m$, $p$ and $k$?
Note that the purpose behind including parameters $p$ and $d$ is to ensure that there is a polynomial bound on the number of edges of $H$, and hence the number of variables $t_e$.

Another common question is to determine the computational complexity of evaluating the polynomials in general at a given point. 
However, this question does not make sense for multivariate polynomials, so we have to consider 'labeled' versions of our polynomials, such as what is studied by Averbouch, 
Godlin and Makowsky \cite{averbouch-godlin-makowsky}. We note that one can take such an approach. Rather than define labeled variants of the hyperedge elimination polynomial, 
we state the question only for the hyperedge elimination polynomial. Given a point $(x_0, y_0, z_0)$, and an integer $d$, what is the complexity of determining $\xi(H; x_0, y_0, z_0)$ for 
all $d$-uniform hypergraphs $H$? We restrict the question to $d$-uniform hypergraphs, because otherwise we already know that $\xi(H; x_0, y_0, z_0)$ is at least $\#P$-hard to evaluate for all 
but a finite choice of $(x_0, y_0, z_0)$, because this is already true for graphs. However, for $d > 2$, this does not immediately follow from the case of graphs. 
The proof techniques from graph polynomials seem like they are still applicable.

Finally, there is a notion of mixed-hypergraph coloring. A mixed hypergraph has two types of hyperedges, called type $C$ and type $D$. For a mixed hypergraph, a coloring is proper if no edge 
of type $C$ is monochromatic, and no edge of type $D$ is rainbow. Recall that an edge is rainbow if each of its vertices get distinct colors. There is a chromatic polynomial for mixed 
hypergraphs, and it can be computed usings a recursive algorithm, known as splitting-contraction. One could naturally consider a mixed hypergraph analogue of the multivariate chromatic 
polynomial. It would be interesting to see if there is any deletion-contraction-extraction analogue of splitting-contraction in this case, and if there are interesting evaluations for such 
polynomials.

\bibliographystyle{amsalpha}
\bibliography{graphpolys.bib}

\end{document}